\theoremstyle{plain}
\newtheorem{thm}{\protect\theoremname}[section]
\theoremstyle{plain}
\newtheorem{cor}[thm]{Corollary}
\theoremstyle{plain}
\newtheorem{lem}[thm]{\protect\lemmaname}
\theoremstyle{plain}
\theoremstyle{plain}
\theoremstyle{definition}
\newtheorem{defn}[thm]{\protect\definitionname}
\theoremstyle{definition}
\newtheorem{example}[thm]{\protect\examplename}
\theoremstyle{remark}
\newtheorem{rem}[thm]{\protect\remarkname}
\theoremstyle{plain}
\numberwithin{equation}{section}
\newcommand{\R}{{\mathbb R}}
\newcommand{\X}{{\R^d}}
\renewcommand{\G}{\mathcal G}
\newcommand{\B}{{\mathcal B}}
\newcommand{\E}{{\mathbb E}}
\newcommand{\la}{\lambda}
\providecommand{\definitionname}{Definition}
\providecommand{\examplename}{Example}
\providecommand{\lemmaname}{Lemma}
\providecommand{\remarkname}{Remark}
\providecommand{\theoremname}{Theorem}
\begin{document}
\title{Compound Poisson Processes: Potentials, Green Measures and Random
Times}
\author{\textbf{Yuri Kondratiev}\\
BiBoS, University of Bielefeld,\\
D-33615 Bielefeld, Germany, and\\
 Interdisciplinary Center for Complex Systems (IDCCS), \\
 National Dragomanov University, 01004 Kiev, Ukraine\\
 Email: kondrat@math.uni-bielefeld.de\and\textbf{Jos{\'e} Lu{\'i}s
da Silva},\\
Faculty of Exact Sciences and Engineering,\\
CIMA, University of Madeira, Campus da Penteada,\\
 9020-105 Funchal, Portugal.\\
 Email: joses@staff.uma.pt}
 
\date{\today}
\maketitle
\begin{abstract}
In this paper we study the existence of Green measures for Markov
processes with a nonlocal jump generator. The jump generator has no second moment and satisfies a suitable condition on its Fourier transform. We also study the same
problem for certain classes of random time changes Markov processes
with jump generator.

\noindent \emph{Keywords}: Markov processes, Green measures, random time change
processes, asymptotic behavior

\noindent \emph{AMS Subject Classification 2010}: 60J65, 47D07, 35R11, 60G52.
\end{abstract}
\tableofcontents{}

\section{Introduction}

Compound Poisson processes are well known in  stochastic analysis as an important class of stochastic processes with
independent increments \cite{Skorohod1991}. In mathematical physics these processes correspond to the random walks in
the continuum with a given jump kernels. Note that random walks in physics 
are considered as diffusions in the wide sense contrary to the much more restricted notion
in  stochastics. That's why random walks are very useful in modeling
 several phenomena observed in concrete physical systems as, e.g., 
amorphous media, random impurity models, etc.

The central problem to be analyzed is how the properties of jump kernels will be
 reflected in the behavior of the random walk, precisely their time asymptotic. 
 One of the many possibilities to analyze the long time behavior of general Markov processes is to study
 their potentials.  The general theory of potentials is well elaborated, see, e.g., \cite{Blumenthal1968}.
 But an application of this theory to particular examples of Markov processes is not a trivial work.
 
 In the present paper we are concerned with the case of compound Poisson processes. 
 We have the following questions to address. 
\begin{enumerate}
\item  Under which conditions on the jump kernel do we have  the existence of the corresponding  potential?
\item How to obtain the integral representations for the potentials by means of measures?
\item Consider the random time changes in the compound Poisson processes. 
How to define the potentials and Green measures for these processes? 
\end{enumerate}
 
 Let us describe each of these questions in more details.  
 Denote by $X$ the compound Poisson process starting from $x\in\X$. The process $X$ is associated to a generator $L$ defined by a kernel $a:\X\longrightarrow \R$.  Given a function $f:\X\longrightarrow\R$ from a proper class, the potential of $f$ is defined by
 \[
 V(x,f):=\int_{0}^{\infty}\E^x[f(X(t))]\,\mathrm{d}t.
 \]
The class of functions $f$ for which the potential $V(x,f)$ exists turns out to be the Banach space $CL(\X):=C_b(\X)\cap L^1(\X)$ with its natural norm $\| f\|_{CL}:=\| f\|_\infty+\| f\|_1$. The usual conditions on the kernel $a$ are positive, continuous, bounded, symmetric and finite second moment. In this paper rather the finite second moment assumption is replaced by a rather weaker condition (see condition (A) in page \pageref{condition-A}) formulated in terms of its Fourier transform. More precisely, there is $A>0$ and $0<\alpha\le2$ such that
\begin{equation}\label{Fourier-a}
    \hat{a}(k)=1-A|k|^{\alpha}+\mathrm{o}(|k|^{\alpha})\quad\mathrm{as}\quad k\to0.
\end{equation}
For example, the Cauchy distribution has no finite second moment but its Fourier transform admits and expansion  as in \eqref{Fourier-a} with $A=\alpha=1$, see Example~\ref{exa:Gauss-Cauchy} below.

The measures which appear in question 2 above for the representations of the potentials we call Green measures. The name is due to the fact that these measures are fundamental solutions to the Markov generators of the associated compound Poisson process. They are Green functions as generalized functions in the classical sense.
These generalized function are realized as measures in our case.  In addition, we would like to mention that our results are strongly dependent on the dimension of the location spaces. Actually, the Green measures 
 may be defined only for the dimensions $d\geq \alpha$, see Theorem~\ref{thm:average} below. For that reason, the study of the asymptotic behaviors of additive functionals of classical Markov processes in dimensions $d<\alpha$ needs careful renormalizations.
 
 We also introduce the notion of random Green measures which  is a more detailed characteristic of the stochastic process $X$. These measures rise from the representation of the corresponding random potentials of $f$. That this, given $f\in CL(\X)$ we define the random variable   
 \begin{equation*}
     Y^x(f):=\int_0^\infty f(X(t))\,\mathrm{d}t
 \end{equation*}
and would like to obtain their integral representations as
\begin{equation*}
    Y^x(f)(\omega)=\int_\X f(y)\G(x,\mathrm{d}y,\omega),\quad \forall \omega\in\Omega.
\end{equation*}
 It is clear that the Green measures for compound Poisson processes are the mathematical expectations of random Green measures, that is, \begin{equation*}
     \G(x,\mathrm{d}y)=\E^x[\G(x,\mathrm{d}y,\cdot)].
 \end{equation*} 
 
 Finally, on the third questions we are interested in the random time changes on the compound Poisson processes, that is, processes defined as $Y(t):=X(D(t))$, $t\ge0$, where $D$ is a random time change. As random times changes we have chosen a class of inverse of subordinators $S$ satisfying assumption (H), see page \pageref{condition-H} for details.  Clearly, a random time
 change will destroy the Markov property of the stochastic process $X$. Nevertheless, we try to define the associated potentials and Green measures. It turns out that the definition of Green measure for this class of stochastic processes need to be renormalized. More precisely, the definition of the Green measures as introduced in Section~\ref{sec:MP-GM} for this class of stochastic processes does not exist for any dimension $d$. Thus, we define a family of proper normalized Green measures such that the limit reduces to the Green measures of the initial stochastic processes $X$, see Theorem~\ref{thm:main-RTC-GM}. As a consequence, we obtain an asymptotic result for the Green measures associated to the random time changes of the compound Poisson process, see Theorem~\ref{thm:asymptotic-FKE}.
 
The paper is organized as follows. In Section \ref{sec:MP-GM} we
give the general definition of Green measure of a Markov process in terms of its transition probabilities, 
see Definition~\ref{def:MP-GM}. We also notice that Green measures are the fundamental solutions to the Markov generators, see \eqref{eq:FS}.
In Section~\ref{sec:Jump-Generators} we introduce the jump generator of a compound Poisson process given by a kernel $a$. We state the assumptions on the kernel $a$ under which the Green measure for the associated process starting from $x\in\X$ exists. We emphasize assumption (A) on the Fourier transform of $a$, see page \pageref{condition-A}. In addition, the class of functions $f$ for which the potential $V(x,f)$ exists is described. In Section~\ref{sec:RGM} we  investigate the integral representation of the random potential $Y^x(f)(\omega)$. The resulting measure is called random Green measure. In Theorem~\ref{thm:representation1} we show the existence of the random Green measure and identify the class of functions $f$ which allows the integral representation.
Finally, in Section~\ref{sec:MP-RT} we study the Green measure of random time changes of the compound Poisson process. 
The class of subordinators determining the time changes is identified and the Green measures are obtained by a limit process, see Theorem~\ref{thm:asymptotic-FKE}.

\section{Markov Processes and Green Measures }

\label{sec:MP-GM}Let $(\Omega,\mathcal{F},\mathbb{P})$ be a probability
space and $X$ a time-homogeneous Markov processes on $\X$
starting from $x\in\X$. That is, $X:[0,\infty)\times\Omega\longrightarrow\X$
with $X(0,w)=x$, for every $w\in\Omega$, see for example \cite{Blumenthal1968,Dynkin1965,Meyer1967,Revuz-Yor-94}
for more details. The Markov process $X$ may be defined in a standard
way by its transition probabilities $P_{t}(x,B)$. The functions $P_{t}(x,B)$ give the
probability of the transition from the point $x\in\X$
at time $t=0$ to the Borel set $B\in\mathcal{B}(\X)$
at time $t>0$. In certain cases, the transition probabilities $P_{t}(x,\cdot)$
may have densities $p_{t}(x,\cdot)$ with respect to the Lebesgue
measure in $\X$, that is, for any Borel set $B\in\mathcal{B}(\X)$
we have
\[
P_{t}(x,B)=\int_{B}p_{t}(x,y)\,\mathrm{d}y.
\]
Equivalently, we have $P_{t}(x,\mathrm{d}y)=p_{t}(x,y)\,\mathrm{d}y$
and the density $p_{t}(x,\cdot)$ is also known as heat kernel.

Let $f:\X\to\R$ be a Borel function and consider the, so-called potential
of $f$, defined by 
\begin{equation}
V(x,f):=\int_{0}^{\infty}\mathbb{E}^{x}[f(X(t)]\,\mathrm{d}t=\int_{0}^{\infty}\int_{\X}f(y)\,P_{t}(x,\mathrm{d}y)\,\mathrm{d}t,\label{eq:potential-f}
\end{equation}
In probability theory, the notion of potential is well known, see
e.g., \cite{Blumenthal1968,Revuz-Yor-94}. Nevertheless, to establish
the existence of $V(x,f)$ is a very difficult problem and the class
of admissible $f$ have to be analyzed for each process $X$ separately.
Assume that for a certain class of functions $f$ (e.g., $f\in C_{0}(\X)$
the space of continuous functions with compact support) the potential
$V(x,f)$ is well defined, then $V(x,f)$ may be represented by a
Radon measure on $\X$ as
\[
V(x,f)=\int_{\X}f(y)\,\G(x,\mathrm{d}y).
\]
The measure $\G(x,\cdot)$ we called the Green measure for
the process $X$. It is possible to express the measure $\G(x,\cdot)$
using the transience property of the process $X$. In fact, it follows
from \eqref{eq:potential-f}, applying Fubini's theorem, that
\[
\G(x,\mathrm{d}y)=\int_{0}^{\infty}P_{t}(x,\mathrm{d}y)\,\mathrm{d}t
\]
assuming the existence of this object as a Radon measure on $\X$.
That is, for any bounded Borel set $B\in\mathcal{B}_{b}(\X)$
we have 
\[
\G(x,B)=\int_{0}^{\infty}P_{t}(x,B)\,\mathrm{d}t<\infty.
\]
In addition, if it happen that $P_{t}(x,\mathrm{d}y)=p_{t}(x,y)\,\mathrm{d}y$,
then the function 
\[
g(x,y):=\int_{0}^{\infty}p_{t}(x,y)\,\mathrm{d}t,\quad\forall y\in\X,
\]
is called the Green function of the Markov process $X$. In this case
the Green measure is given by $\G(x,\mathrm{d}y)=g(x,y)\,\mathrm{d}y$.
The existence of the Green function $g(x,y)$ for a given process
$X$ or transition probability $P_{t}(x,\cdot)$, even for simple
classes of Markov processes, is not always guaranteed. As an example
we consider $X=B$ the 1-dimensional Brownian motion (Bm for short),
then for every $f\in L^{1}(\R)$ and any $t>0$ we have 
\[
\int_{0}^{t}f\big(B(s)\big)\,\mathrm{d}s=\int_{\R}f(y)L_{t}(y)\,\mathrm{d}y,
\]
where $L_{t}(y)$ is the local time of Bm up to time $t$ at the point
$y$. For the definition and properties of local time of 1-dimensional
Bm, see, e.g., \cite{Takacs1995,Takacs1995a,Borodin1984,Borodin2002}.
Now, the asymptotic behavior as $t\to\infty$ of $\int_{0}^{t}f(B(s))\,\mathrm{d}s$,
as the 1-dimensional Bm $B$ is recurrent, we have $L_{\infty}(y)=\infty$
for all $y\in\R$, consequently, the integral functional $\int_{0}^{\infty}f(B(t))\,\mathrm{d}t$,
roughly speaking, does not exists. 

Nevertheless, Green functions for certain classes of Markov processes
are well known in probability theory, see, e.g., \cite{CGL20,Grigoryan2018a}
and references therein for more details.

Thus, we give the definition of Green measure for a Markov process
$X.$
\begin{defn}[Green measure]\label{def:MP-GM}
Let $X(t)$, $t\ge0$, be a Markov process starting from $x\in\X$
with transition probability $P_{t}(x,\cdot)$. The Green measure of
$X$ is defined by 
\[
\G(x,B):=\int_{0}^{\infty}P_{t}(x,B)\,\mathrm{d}t,\;\;B\in\B_{b}(\X),
\]
or 
\[
\int_{\X}f(y)\G(x,\mathrm{d}y)=\int_{\X}f(y)\int_{0}^{\infty}P_{t}(x,dy)\,\mathrm{d}t,\;\;f\in C_{0}(\X)
\]
whenever these integrals exist. 
\end{defn}

In the following, we give an equivalent definition of the Green measure
$\G(x,\cdot)$ of $X$ as the fundamental solution of the
corresponding generator $L$. We may start with a Markov semigroup
$T(t),t\geq0$, that is, a family of linear operators on a Banach
space $E$. As a Banach space $E$, we may use bounded measurable
functions $B(\X)$, bounded continuous functions $C_{b}(\X)$ or the
Lebesgue spaces $L^{p}(\X)$, $p\ge1$, depending on each particular
case. The semigroup $T(t)$, $t\ge0$, is associated with a Markov
process $\{X(t),t\geq0\;|\;\mathbb{P}^{x},x\in\X\}$ if for any $t\ge0$
we have

\[
(T(t)f)(x)=\mathbb{E}^{x}[f(X(t))]=\int_{\X}f(y)P_{t}(x,\mathrm{d}y),\quad f\in E.
\]
The transition probabilities $P_{t}(x,\cdot)$ are constructed from
the semigroup by choosing $f=\mathbbm{1}_{B}$, $B\in\B(\X)$, that
is, 
\[
P_{t}(x,B)=(T(t)\mathbbm{1}_{B})(x).
\]

We recall the definition of the Markov generator $L$ of the semigroup
$T(t)$, $t\ge0$.
\begin{defn}[Markov generator]
We define the generator $(L,D(L))$ of the Markov semigroup $T(t)$,
$t\ge0$, by 
\[
D(L):=\left\{ f\in E\,\middle|\,\frac{T(t)f-f}{t}\;\mathrm{converges\;in}\;E\;\mathrm{when}\;t\to0^{+}\right\} 
\]
and for every $f\in D(L)$, $y\in\X$ 
\[
(Lf)(y):=\lim_{t\to0^{+}}\frac{(T(t)f)(y)-f(y)}{t}.
\]
Then $D(L)$ (the domain of $L$) is a linear subspace of $E$ and
$L:D(L)\longrightarrow E$ is a linear operator. There are several
known properties of the generator $L$ and we have the full description
of the Markov generators via the so-called maximum principle, see,
e.g., \cite{Grigoryan2014}. 
\end{defn}

From the relation between semigroup and generator we have
\begin{equation}
\begin{split}\int_{0}^{\infty}(T(t)f)(x)\,\mathrm{d}t & =\int_{0}^{\infty}\mathbb{E}^{x}[f(X(t))]\,\mathrm{d}t=\int_{0}^{\infty}\int_{\X}f(y)P_{t}(x,\mathrm{d}y)\,\mathrm{d}t\\
 & =\int_{\X}f(y)\G(x,\mathrm{d}y)=-(L^{-1}f)(x).
\end{split}
\label{eq:FS}
\end{equation}
 for every $f\in C_{0}(\X)$. Because any Radon measure defines a
generalized function, then we may write 
\[
\G(x,\mathrm{d}y)=g(x,y)\,\mathrm{d}y,
\]
where $g(x,\cdot)\in D'(\X)$ is a positive generalized function for
all $x\in\X$. In view of (\ref{eq:FS}) the Green measure is the
fundamental solution corresponding to the operator $L$. Note that
the existence and regularity of this fundamental solution produces
a description of admissible Markov processes for which the Green measure
exists. 

\section{Jump Generators and Green Measures}

\label{sec:Jump-Generators}Let $a:\X\to\R$ be a fixed kernel function
which is symmetric, positive, bounded, continuous, and integrable
with $\int_{\X}a(x)\,\mathrm{d}x=1$. Using the kernel
$a$ we define an operator $L=L_{a}$ on $E$ (as stated above) by
\begin{equation}
(Lf)(x):=\int_{\X}a(x-y)[f(y)-f(x)]\,\mathrm{d}y=(a*f)(x)-f(x),\quad x\in\X.\label{eq:jump-generator}
\end{equation}
The operator $L$ is self-adjoint in $L^{2}(\X)$ and
a bounded linear operator in all $L^{p}(\X)$, $p\ge1$.
We call this operator the jump generator with jump kernel $a$. The
associated Markov process $X$ is of a pure jump type and is known
in stochastic as compound Poisson process (or continuous time random walk),
see \cite{Skorohod1991}
for more details. If we denote $u(t,x):=\mathbb{E}^{x}[f(X(t))]$,
$x\in\X$, $t>0$, and $f\in E$, then $u(t,x)$ satisfies
the Kolmogorov equation 
\begin{equation}
\begin{cases}
\partial_{t}u(t,x) & =Lu(t,x),\\
u(0,x) & =f(x).
\end{cases}
\label{eq:KEquation}
\end{equation}
If $u(t,x)$ denotes the density of a population at the position $x$
at time $t$, and $a(x-y)$ is thought of as the probability distribution
of jumping from location $y$ to location $x$, then $(a*u)(t,x)$
gives the rate at which individuals arrive at the position $x$ from
other places.

Many analytic properties of the jump generator $L$ were investigated
in \cite{Grigoryan2018,Kondratiev2017,Kondratiev2018}. Here we recall
some of these properties necessary in what follows.

First notice that, due to the symmetry of the kernel $a$, its Fourier
image is given by

\[
\hat{a}(k)=\int_{\X}\mathrm{e}^{-i(k,y)}a(y)\,\mathrm{d}y=\int_{\X}\cos((k,y))a(y)\,\mathrm{d}y.
\]
In addition, it is easy to see that 
\[
\hat{a}(0)=1,\quad|\hat{a}(k)|\le1,\;k\neq0,\quad\hat{a}(k)\to0,\;k\to\infty.
\]
On the other hand, since $L$ is a convolution operator, the Fourier
image of $L$ is the multiplication operator by 
\[
\hat{L}(k)=\hat{a}(k)-1
\]
which is the symbol of $L$.

In what follows we make the following additional assumption on the
jump kernel $a$.
\begin{description}\label{condition-A}
\item [{(A)}] The jump kernel $a$  has an expansion of the form 
\[
\hat{a}(k)=1-A|k|^{\alpha}+\mathrm{o}(|k|^{\alpha})\quad\mathrm{as}\quad k\to0,
\]
with $A>0$ and $0<\alpha\le2$.
\end{description}
\begin{example}\label{exa:Gauss-Cauchy}
A well known example of a jump kernel $a$ which satisfies Assumption
(A) is the Gaussian kernel: 
\[
a(x)=\frac{1}{2^{d/2}}\mathrm{e}^{-|x|^{2}/4}\;\Longrightarrow\;\hat{a}(k)=\mathrm{e}^{-|k|^{2}}=1-|k|^{2}+\mathrm{o}(|k|^{2}),\quad k\to0.
\]
The Gaussian kernel has second moment, but in general the second moment
of $a$ may fail, so that more general expansions may occur: $\hat{a}(k)=1-A|k|^{\alpha}+\mathrm{o}(|k|^{\alpha})$
with $\alpha\in(0,2)$. As an example in one dimension we mention
the Cauchy distribution 
\[
a(x)=\frac{1}{1+x^{2}}\;\Longrightarrow\;\hat{a}(k)=1-|k|+\mathrm{o}(|k|),\quad k\to0.
\]
\end{example}

For $\la\in(0,\infty)$, let $R_{\la}(L)$ denote the resolvent of
the operator $L$, that is, $R_{\la}(L)=(\lambda-L)^{-1}$. Using
the Neumann series for the operator $(\lambda-L)^{-1}$, we obtain
\begin{equation}
(\lambda I-L)^{-1}=\frac{1}{1-\lambda}I+\frac{1}{1+\lambda}A_{\lambda},\label{eq:resolvent}
\end{equation}
where $I$ is the identity operator and $A_{\lambda}$ is the operator
\[
A_{\lambda}=(1+\lambda)(\lambda I-L)^{-1}-I=\sum_{n=1}^{\infty}\frac{a^{*n}}{(1+\lambda)^{n}},
\]
where $a^{*n}=a*a*\dots *a$ is the $n$th times convolution of $a$ with itself. If $\G_{\la}(x,y)$, $x,y\in\X$, $\la\in(0,\infty)$ denotes the
resolvent kernel of $R_{\la}(L)$, then it follows from \eqref{eq:resolvent}
that the kernel $\G_{\la}(x,y)$ admits the decomposition (cf.~Eq.
(4) in \cite{Kondratiev2018})
\begin{equation}\label{kernel-resolvent}
    \G_{\la}(x,y)=\frac{1}{1+\la}\big(\delta(x-y)+G_{\la}(x-y)\big),
\end{equation}
where $G_{\lambda}$ is the kernel of the operator $A_{\lambda}$. The kernel $G_\lambda(x)$ is given by 
\begin{equation}
G_{\la}(x)=\sum_{n=1}^{\infty}\frac{a_{n}(x)}{(1+\la)^{n}},\label{eq:Green-kernel}
\end{equation}
and $a_{n}(x)=a^{\ast n}(x)$ is the $n$-fold convolution of the
jump kernel $a$. In addition, the kernel $G_{\lambda}(x)$ is given
in terms of its inverse Fourier transform by
\[
G_{\lambda}(x)=\frac{1}{(2\pi)^{d}}\int_{\X}\mathrm{e}^{\mathrm{i}(k,x)}\frac{\hat{a}(k)}{1+\lambda-\hat{a}(k)}\,\mathrm{d}k.
\]
Notice that the resolvent kernel $\G_{\la}(x,y)$ has a singular part,
$\delta(x-y)$ and a regular part $G_{\la}(x-y)$. The Green function,
as a generalized function, has the form 
\[
\G_{0}(x)=\delta(x)+G_{0}(x).
\]
In particular, the Fourier representation for $G_{0}$ has the form 

\begin{equation}
G_{0}(x)=\frac{1}{(2\pi)^{d}}\int_{\X}\mathrm{e}^{i(k,x)}\frac{\hat{a}(k)}{1-\hat{a}(k)}\,\mathrm{d}k.\label{eq:Fourier-Green-kernel}
\end{equation}

\begin{thm}
\label{thm:average}Let $a$ be a jump kernel as defined above which
satisfies assumption \emph{(A)}. Then the Green measure for the random walk
exists if $d>\alpha$.
\end{thm}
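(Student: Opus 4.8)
The plan is to show that the Green function $\G_0(x)=\delta(x)+G_0(x)$ is a well-defined Radon measure by establishing that $G_0$ is an integrable, or at least locally integrable, function on $\X$. By \eqref{eq:Fourier-Green-kernel} we have, formally,
\[
G_0(x)=\frac{1}{(2\pi)^d}\int_{\X}\mathrm{e}^{i(k,x)}\frac{\hat a(k)}{1-\hat a(k)}\,\mathrm{d}k,
\]
so the entire question reduces to understanding the integrability of the symbol $k\mapsto \hat a(k)/(1-\hat a(k))$ on $\X$. I would split the integral into a neighborhood of the origin, say $|k|\le\rho$, and its complement.

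First I would handle the region $|k|\ge\rho$. Here $1-\hat a(k)$ is bounded away from zero: indeed $|\hat a(k)|<1$ for $k\ne 0$, $\hat a(k)\to 0$ as $k\to\infty$, and $\hat a$ is continuous, so on the closed set $\{|k|\ge\rho\}$ the quantity $1-\hat a(k)$ has a strictly positive infimum (using that $\hat a(k)\to 0$ at infinity to get uniformity). Hence $|\hat a(k)/(1-\hat a(k))|\le C|\hat a(k)|$, and since $\hat a\in C_0$ and, because $a\in L^1$ is bounded and the $\hat a$ decays, one gets $\hat a\in L^2$ at least (as $a\in L^1\cap L^\infty\subset L^2$, so $\hat a\in L^2$); more carefully, $|\hat a(k)|\le 1$ and $\hat a\to0$, and one uses that the tail contributes a finite amount — this is the routine part and contributes a bounded continuous function of $x$ after inverse Fourier transform (or one simply notes the tail piece is a finite measure).

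The main work is the region $|k|\le\rho$. By assumption (A), $1-\hat a(k)=A|k|^\alpha+\mathrm{o}(|k|^\alpha)$, so for $\rho$ small enough $1-\hat a(k)\ge \tfrac{A}{2}|k|^\alpha$ and $\hat a(k)$ is close to $1$, whence
\[
\left|\frac{\hat a(k)}{1-\hat a(k)}\right|\le \frac{C}{|k|^\alpha}\qquad\text{for }0<|k|\le\rho.
\]
The function $|k|^{-\alpha}$ is integrable near the origin in $\X=\R^d$ precisely when $\alpha<d$, which is exactly the hypothesis $d>\alpha$. Therefore $k\mapsto \hat a(k)/(1-\hat a(k))$ is integrable on all of $\X$, so its inverse Fourier transform $G_0$ is a bounded continuous function; in particular $G_0\in L^1_{\mathrm{loc}}(\X)$ and defines a (positive, by \eqref{eq:Green-kernel} as a sum of convolutions of the positive kernel $a$) Radon measure $G_0(x)\,\mathrm{d}x$. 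Consequently $\G_0(x)=\delta(x)+G_0(x)$ is a Radon measure, the potential $V(x,f)=\int_\X f(y)\,\G_0(x,\mathrm{d}y)$ is finite for $f\in CL(\X)$, and the Green measure exists. Monotone convergence in \eqref{eq:Green-kernel} as $\la\downarrow 0$ (each $a_n\ge 0$) justifies passing from $G_\la$ to $G_0$ and identifies the limit with $\int_0^\infty P_t(x,\cdot)\,\mathrm{d}t$ via the standard resolvent formula $R_\la(L)=\int_0^\infty e^{-\la t}T(t)\,\mathrm{d}t$.

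The step I expect to be the genuine obstacle is making the near-origin estimate fully rigorous when $\alpha$ can equal $2$ or when the $\mathrm{o}(|k|^\alpha)$ term is only controlled in the limit $k\to0$: one must choose $\rho$ carefully so that the error term does not spoil the lower bound $1-\hat a(k)\ge\tfrac{A}{2}|k|^\alpha$, and one must also rule out any additional zeros of $1-\hat a$ in an annulus $\rho\le|k|\le\rho'$ before invoking compactness — this uses $|\hat a(k)|<1$ strictly for $k\ne0$, which follows from $a\ge0$, $\int a=1$, and $a$ not being concentrated on a lattice (here $a$ is a nontrivial density, so $\hat a(k)=1$ forces $k=0$). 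Everything else is bookkeeping with the Fourier transform and the positivity of the convolution series.
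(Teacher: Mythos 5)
Your proposal follows essentially the same route as the paper: reduce the existence of the Green measure to finiteness of the regular part $G_{0}$ via the Fourier representation \eqref{eq:Fourier-Green-kernel}, and use assumption (A) to show that $(1-\hat{a}(k))^{-1}\sim A^{-1}|k|^{-\alpha}$ is integrable near $k=0$ precisely when $d>\alpha$. The paper's own proof is only a two-sentence sketch of this argument, so your splitting of the integral and your treatment of the region $|k|\ge\rho$ merely supply details the paper leaves implicit.
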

\begin{proof}
It is sufficient to show that the regular part of the resolvent kernel
$G_{0}(x)$, given by \eqref{eq:Fourier-Green-kernel}, is finite
for all $x\in\X$. It follows from the assumptions on
$a$ that the integral in \eqref{eq:Fourier-Green-kernel} exists
for all $x\in\X$ due to the integrability $(1-\hat{a}(k))^{-1}$
at $k=0$ by the assumption (A).
\end{proof}

\begin{rem}
\begin{enumerate}
    \item The existence of the Green measure shown in Theorem~\ref{thm:average}  is based on the Fourier representation of the regular part (i.e., $G_\lambda$) of the resolvent kernel $\G_\lambda(x,y)$ in \eqref{kernel-resolvent} at $\lambda=0$.
	\item On the other hand, we are interested on the description of the  class of integrable functions $f:\X\longrightarrow\R$ for which the potential of $f$ in \eqref{eq:potential-f} exits.
\end{enumerate}
\end{rem}
In order to identify this class of functions $f$ we define the following Banach space:
\begin{equation}\label{eq:Banach-space-CL}
CL(\R^{d}):=C_{b}(\X)\cap L^{1}(\X)    
\end{equation}
with the norm
\[
\|f\|_{CL}:=\|f\|_{\infty}+\|f\|_{1}:=\sup_{x\in\X}|f(x)|+\int_{\X}|f(x)|\,\mathrm{d}x.
\]
\begin{cor}
For any $f\in CL(\X)$ the potential $V(x,f)$ of $f$ is well defined and we have
\[
V(x,f)=\int_0^\infty\E^x[f(X(t)]\,\mathrm{d}t=\int_{\X}f(y)\G(x,\mathrm{d}y).
\]
\end{cor}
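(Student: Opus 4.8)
The plan is to combine the Green measure existence from Theorem~\ref{thm:average} with the decomposition \eqref{kernel-resolvent} of the resolvent kernel at $\lambda=0$. Writing the potential using the semigroup identity \eqref{eq:FS}, we have formally $V(x,f)=\int_\X f(y)\,\G(x,\mathrm{d}y)$, where the Green measure splits as $\G(x,\mathrm{d}y)=\delta(x-y)\,\mathrm{d}y+G_0(x-y)\,\mathrm{d}y$, i.e.\ $V(x,f)=f(x)+(G_0*f)(x)$. So the corollary amounts to showing that for $f\in CL(\X)$ this expression is finite for every $x$, that it really equals $\int_0^\infty\E^x[f(X(t))]\,\mathrm{d}t$, and that the interchange of integral and sum/convolution is justified.

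First I would control the two pieces separately. The term $f(x)$ is bounded by $\|f\|_\infty\le\|f\|_{CL}$, so that is immediate. For the convolution term $(G_0*f)(x)$ I would use the representation \eqref{eq:Green-kernel}, $G_0=\sum_{n\ge1}a_n$, together with Young's inequality: since $\|a_n\|_1=\|a\|_1^n=1$ and $\|a_n\|_\infty\le\|a\|_\infty\|a\|_1^{n-1}=\|a\|_\infty$ (because $a$ is bounded and $a^{*n}=a*a^{*(n-1)}$), each convolution $a_n*f$ lies in $CL(\X)$. The finiteness of $G_0*f$ pointwise then follows from Theorem~\ref{thm:average}: for $d>\alpha$ the function $G_0$ is a genuine (locally integrable, in fact here we get more from the Fourier bound) function, so $G_0*f$ is well defined; alternatively one can argue via the Fourier side, $\widehat{G_0*f}(k)=\hat f(k)\hat a(k)/(1-\hat a(k))$, which is integrable near $k=0$ by assumption (A) when $d>\alpha$ and integrable away from $0$ since $\hat f\in L^1$ is bounded and $\hat a(k)/(1-\hat a(k))$ is bounded there.

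Next I would establish the identity with the time integral. Using $\E^x[f(X(t))]=(T(t)f)(x)$ and the spectral/Neumann-series structure, $\int_0^\infty T(t)f\,\mathrm{d}t = (-L)^{-1}f = R_0(L)f$, whose kernel is exactly $\G_0$ by \eqref{kernel-resolvent}; this is the content of \eqref{eq:FS}, which we may invoke directly once we know the objects are finite. Concretely, one can also verify it by dominated convergence starting from the finite-$\lambda$ resolvent $\int_0^\infty e^{-\lambda t}T(t)f\,\mathrm{d}t=R_\lambda(L)f$ and letting $\lambda\downarrow0$, using \eqref{eq:resolvent} and monotone convergence on the positive-kernel pieces (splitting $f=f_+-f_-$). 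The main obstacle is precisely this limiting/interchange step: one must justify passing $\lambda\to0$ inside the series $G_\lambda=\sum_n a_n/(1+\lambda)^n$ and inside the time integral simultaneously, and ensure that the resulting sum $\sum_n (a_n*f)(x)$ converges. This is where $d>\alpha$ is essential — it is exactly the condition from Theorem~\ref{thm:average} guaranteeing $\sum_n a_n = G_0$ converges as a distribution (and, with the Fourier bound, pointwise after convolving with $f\in CL$), so the series $\sum_n(a_n*f)(x)$ is absolutely convergent.

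Finally, having both the finiteness and the representation, the displayed equality $V(x,f)=\int_0^\infty\E^x[f(X(t))]\,\mathrm{d}t=\int_\X f(y)\,\G(x,\mathrm{d}y)$ follows by assembling: the first equality is the definition \eqref{eq:potential-f}, and the second is \eqref{eq:FS} specialized to $f\in CL(\X)$, legitimate because we have just checked all integrals in \eqref{eq:FS} are finite for such $f$. I would also remark that $V(\cdot,f)=f+G_0*f\in CL(\X)$, so the potential operator maps $CL(\X)$ into itself, which is the natural closing observation.
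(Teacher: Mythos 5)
Your proposal is correct and takes essentially the same route as the paper: both decompose $V(x,f)=f(x)+(G_{0}*f)(x)$ via the resolvent kernel \eqref{kernel-resolvent} at $\lambda=0$ and derive finiteness from assumption (A) through the Fourier representation \eqref{eq:Fourier-Green-kernel}, which makes $G_{0}$ uniformly bounded so that $|(G_{0}*f)(x)|\le\|G_{0}\|_{\infty}\|f\|_{1}$ while $|f(x)|\le\|f\|_{\infty}$. The paper's proof is simply a terser version of yours, omitting the explicit $\lambda\downarrow0$ and series-interchange justifications you spell out.
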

\begin{proof}It follows from \eqref{kernel-resolvent} with $\lambda=0$ that 
\[
V(x,f)=f(x)+\int_\X f(y)G_0(x-y)\,\mathrm{d}y,\quad \forall x\in\R.
\]
From Theorem~\ref{thm:average} and \eqref{eq:Fourier-Green-kernel} the function $G_0(x)$ is uniformly bounded, hence por any $f\in CL(\X)$ the potential $V(x,f)$ is well defined and claim follows.
\end{proof}

\section{Potentials and Random Green Measures}
\label{sec:RGM}
In this section we are interested in another object associated with
the compound Poisson process $X$
introduced in Section~\ref{sec:Jump-Generators}. Namely, having
in mind that $X$ starts from $x\in\X$, given a function $f:\X\longrightarrow\R$,
we define the random variables $Y^{x}(f)$ by 
\begin{equation}
Y^{x}(f):=\int_{0}^{\infty}f(X(t))\,\mathrm{d}t.\label{eq:random-potential}
\end{equation}
We call $Y^{x}(f)$ the random potentials of $f$. The relation between
$Y^{x}(f)$ and the Green measure $\G(x,\mathrm{d}y)$ introduced
in the previous sections is clear: 
\[
\E^{x}[Y^{x}(f)]=\int_{\X}f(y)\G(x,\mathrm{d}y),\quad \forall f\in CL(\X).
\]
Our aim is to show that the random variables $Y^{x}(f)$ admit
the representation 
\[
Y^{x}(f)(\omega)=\int_{\X}f(y)\G(x,\mathrm{d}y,\omega),\quad \omega\in \Omega.
\]
with a vector valued random measure $\G(x,\mathrm{d}y,\omega)$, called random Green measure from now on. These measures take values
in $L^{1}(\mathbb{P}):=L^{1}(\Omega,\mathbb{P})$ and they are finite
in all bounded Borel sets $B\in\mathcal{B}_{b}(\X)$.

The following theorem is the main result of this section on the existence
of the random Green measure for the random potential \eqref{eq:random-potential}.
\begin{thm}
\label{thm:representation1} For each $x\in\X$ the operator 
\[
Y^{x}:CL(\X)\longrightarrow L^{1}(P)
\]
has a unique representation given, for all $f\in CL(\X)$ and every
$\omega\in\Omega$, by 
\begin{equation}
Y^{x}(f)(\omega)=\int_{\X}f(y)\,\G(x,\mathrm{d}y,\omega)\label{repr}
\end{equation}
with a vector valued $\sigma$-additive $($in the strong topology
of $L^{1}(\mathbb{P}))$ Radon measure $\G(x,\mathrm{d}y,\omega)$
on $\B_{b}(\X)$.
\end{thm}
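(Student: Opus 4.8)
The idea is to realise $\G(x,\cdot,\omega)$ as the total occupation‑time measure of the path $t\mapsto X(t)$ and to upgrade it, using the a priori estimates behind Theorem~\ref{thm:average}, to a $\sigma$-additive $L^1(\mathbb{P})$-valued Radon measure. First I would record the basic bound. From the identity $V(x,g)=g(x)+\int_\X g(y)\,G_0(x-y)\,\mathrm{d}y$ (established in the proof of the Corollary) together with the uniform boundedness of $G_0$ furnished by Theorem~\ref{thm:average} and \eqref{eq:Fourier-Green-kernel}, Tonelli's theorem gives, for any Borel $g\in B(\X)\cap L^1(\X)$,
\[
\E^x\big[|Y^x(g)|\big]\le\int_0^\infty\E^x\big[|g(X(t))|\big]\,\mathrm{d}t=V(x,|g|)\le\|g\|_\infty+\|G_0\|_\infty\,\|g\|_1 .
\]
Since only $\|g\|_\infty$ and $\|g\|_1$ enter and continuity of $g$ is never used, $Y^x$ extends boundedly to $B(\X)\cap L^1(\X)$, and we may define, for $B\in\B_b(\X)$,
\[
\G(x,B,\cdot):=Y^x(\mathbbm{1}_B)=\int_0^\infty\mathbbm{1}_B\big(X(t)\big)\,\mathrm{d}t\in L^1(\mathbb{P}),
\]
a pathwise nonnegative element of $L^1(\mathbb{P})$ with $\E^x[\G(x,B,\cdot)]=\int_0^\infty P_t(x,B)\,\mathrm{d}t=\G(x,B)$, the scalar Green measure of Definition~\ref{def:MP-GM}, which is finite on $\B_b(\X)$ by Theorem~\ref{thm:average}.

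Next I would prove strong $\sigma$-additivity and the Radon property. For $B=\bigsqcup_{n\ge1}B_n$ with $B,B_n\in\B_b(\X)$, summing $\mathbbm{1}_B(X(t))=\sum_n\mathbbm{1}_{B_n}(X(t))$ and integrating in $t$ by monotone convergence yields $\G(x,B,\cdot)=\sum_n\G(x,B_n,\cdot)$ with nonnegative summands; setting $C_N:=\bigsqcup_{n\le N}B_n$, the partial sums $S_N=\G(x,C_N,\cdot)$ increase pathwise to $\G(x,B,\cdot)$, so
\[
\big\|\G(x,B,\cdot)-S_N\big\|_{L^1(\mathbb{P})}=\E^x[\G(x,B,\cdot)]-\E^x[S_N]=\G(x,B)-\G(x,C_N)\longrightarrow 0
\]
because the scalar measure $\G(x,\cdot)$ is countably additive and finite on $B$. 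Positivity also shows that the variation of $\G(x,\cdot,\cdot)$ equals $\G(x,\cdot)$, which by the Corollary is $\delta_x+G_0(x-\cdot)\,\mathrm{d}y$ and hence a Radon measure; so $\G(x,\cdot,\cdot)$ is an $L^1(\mathbb{P})$-valued Radon measure on $\B_b(\X)$.

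Then comes the representation \eqref{repr} and uniqueness. Splitting $f=f^+-f^-$ reduces to $f\in CL(\X)$, $f\ge0$; set $f_n:=\min(f,n)\,\mathbbm{1}_{\{|y|\le n\}}$ and approximate each $f_n$ from below by simple functions $s_{n,k}=\sum_j c_j\mathbbm{1}_{A_j}$ with $A_j\in\B_b(\X)$. Linearity gives $Y^x(s_{n,k})=\int_\X s_{n,k}(y)\,\G(x,\mathrm{d}y,\cdot)$, and letting $k\to\infty$, then $n\to\infty$, both sides converge in $L^1(\mathbb{P})$: the left side to $Y^x(f)$ by monotone convergence together with $\E^x[Y^x(f)]=V(x,f)<\infty$, and the right side to $\int_\X f(y)\,\G(x,\mathrm{d}y,\cdot)$ since $\big\|\int_\X(f-s_{n,k})\,\G(x,\mathrm{d}y,\cdot)\big\|_{L^1(\mathbb{P})}\le\int_\X(f-s_{n,k})(y)\,\G(x,\mathrm{d}y)\to0$. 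This establishes \eqref{repr} pathwise for every $\omega$ at which $\G(x,\cdot,\omega)$ is locally finite, i.e.\ $\mathbb{P}$-almost surely. For uniqueness, if $\G'(x,\cdot,\cdot)$ is another representing Radon measure on $\B_b(\X)$, then for every $\phi\in L^\infty(\mathbb{P})$ the scalar Radon measures $B\mapsto\E^x[\phi\,\G(x,B,\cdot)]$ and $B\mapsto\E^x[\phi\,\G'(x,B,\cdot)]$ agree on $C_0(\X)\subset CL(\X)$, hence coincide by the uniqueness part of the Riesz representation theorem; as $\phi$ is arbitrary, $\G(x,B,\cdot)=\G'(x,B,\cdot)$ in $L^1(\mathbb{P})$ for all $B\in\B_b(\X)$.

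The main obstacle is the $\sigma$-additivity step: one needs the scalar Green measure $\G(x,\cdot)$ to be a genuine locally finite measure — exactly what Theorem~\ref{thm:average} guarantees under assumption (A) when $d>\alpha$ — and then one must exploit the pathwise positivity of the occupation measure to convert the required convergence in the strong topology of $L^1(\mathbb{P})$ into ordinary monotone convergence of expectations; without positivity one would be forced into a genuinely vector-measure-theoretic argument, since $L^1(\mathbb{P})$ lacks the Radon–Nikodym property.
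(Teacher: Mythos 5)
The paper contains no proof of Theorem~\ref{thm:representation1}: it simply refers the reader to Theorem~2.1 of \cite{Kondratiev2022}. Your argument is therefore a self-contained substitute rather than a variant of something in the text, and it is essentially sound, and of the same occupation-measure type as the cited reference. The key moves are all correct: the a priori bound $\E^{x}[|Y^{x}(g)|]\le\|g\|_{\infty}+\|G_{0}\|_{\infty}\|g\|_{1}$ coming from $\G_{0}(x,\mathrm{d}y)=\delta_{x}(\mathrm{d}y)+G_{0}(x-y)\,\mathrm{d}y$ and the uniform boundedness of $G_{0}$; the definition $\G(x,B,\cdot)=\int_{0}^{\infty}\mathbbm{1}_{B}(X(t))\,\mathrm{d}t$ with $\E^{x}[\G(x,B,\cdot)]=\G(x,B)<\infty$ for $B\in\B_{b}(\X)$; and, crucially, the use of pathwise positivity to turn monotone pathwise convergence into strong $L^{1}(\mathbb{P})$ convergence via $\|\G(x,B,\cdot)-S_{N}\|_{L^{1}(\mathbb{P})}=\G(x,B)-\G(x,C_{N})\to0$. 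The simple-function approximation for general $f\in CL(\X)$ and the duality--Riesz argument for uniqueness also go through (a functional $\phi\in L^{\infty}(\mathbb{P})$ composed with a $\sigma$-additive $L^{1}$-valued measure is, on the Borel subsets of each fixed ball, a finite signed measure, so Riesz applies). The one point you should make explicit is that your construction yields \eqref{repr} only for those $\omega$ at which the occupation measure is locally finite, i.e.\ $\mathbb{P}$-almost surely, whereas the theorem as stated asserts it for every $\omega\in\Omega$; this is an imprecision of the statement itself (transience controls the occupation measure only almost surely), and the honest fix is either to weaken the claim to a.s.\ validity or to redefine $\G(x,\cdot,\omega)$ on the exceptional null set. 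With that caveat recorded, your proof is complete and, unlike the paper's, actually on the page.
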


\begin{proof}
The proof follows the same arguments as Theorem 2.1 in \cite{Kondratiev2022} to which we address the interested reader.
\end{proof}
\begin{rem}
For particular models, it is possible to show that $\mathbb{E}[\G(x,\X,\cdot)]<\infty$,
that is $\G(x,\X,\omega)<\infty$ for $\mathbb{P}$-a.a.~$\omega\in\Omega$.
In addition, the random variables $Y^x(f)$ are not degenerated, that is, its
variance is positive. An example is provided in Proposition 2.3 in
\cite{Kondratiev2022}.
\end{rem}

\section{Green Measures for Time Changed Markov Processes }

\label{sec:MP-RT}Time change Markov processes have found many applications
in probability theory, see \cite{Magdziarz2015} for a detailed discussion
and several related references. In this section we investigate the
existence of the Green measure $\G(x,\mathrm{d}y)$ for the
time changes of the compound Poisson process $X$ introduced before.
As time change we choose a class $D(t)$, $t\ge0$, of inverse subordinators
to be specified later, see Subsection~\ref{subsec:The-Class-TC}.
Hence, we are interested in a new process $Y=Z(t)$, $t\ge0$, which
is constructed by a random time change $D(t)$, $t\ge0$, in $X$.
That is, we define the stochastic process  $Z$ by 
\begin{equation}
Z(t):=X(D(t)),\quad t\ge0.\label{eq:TC-MP}
\end{equation}
The connection between the process $X$ and $Y$ were investigated
first in the paper \cite{Mura_Taqqu_Mainardi_08} and later by other
authors, see e.g., \cite{Toaldo2015} and references therein. The
situation is described as follows. Define the function $u(t,x)$
by 
\[
u(t,x):=\E^{x}[f(X(t))],\;t>0,\;x\in\X,\;f\in E.
\]
The function $u(t,x)$ is the solution of the Kolmogorov equation
\eqref{eq:KEquation} with $L$ the jump generator defined in \eqref{eq:jump-generator}.
We define a similar function for $Z(t)$: 
\[
v(t,x)=\E^{x}[f(Z(t))].
\]
Then this function satisfies the following fractional Kolmogorov equation:
\begin{equation}
\mathbb{D}_{t}^{(k)}v(t,x)=Lv(t,x),\label{FKE}
\end{equation}
where $L$ is the jump generator of $X$, $k\in L^1_{\mathrm{loc}
}(\R_+)$ is a function defined
in terms of the L{\'e}vy measure of the subordinator and $\mathbb{D}_{t}^{(k)}$ is the differential-convolution
operator defined in \eqref{eq:general-derivative}. Moreover, if
the time change $D(t)$ admits a density $\rho_{t}$, $t>0$, then
the subordination formula holds:

\begin{equation}
v(t,x)=\int_{0}^{\infty}u(\tau,x)\rho_{t}(\tau)\,\mathrm{d}\tau.\label{SUB}
\end{equation}
In terms of the marginal distribution $\mu_{t}^{x}$ of $X(t)$ and $\nu_{t}^{x}$
of $Z(t)$ the subordination relations for these distributions is
given by 
\begin{equation}
\nu_{t}^{x}=\int_{0}^{\infty}\mu_{\tau}^{x}\rho_{t}(\tau)\,\mathrm{d}\tau.\label{SUBM}
\end{equation}
Below we use these relations to study the renormalized Green measure
associated to the stochastic process $Z$.

\subsection{The Class of Times Changes}

\label{subsec:The-Class-TC}Let $S=\{S(t),\;t\ge0\}$ be a subordinator
without drift starting from zero with Laplace exponent $\Phi$ and
L{\'e}vy measure $\sigma$, see \cite{Bertoin96} for more details.
The Laplace transform of $S(t)$, $t\ge0$, is given by
\[
\mathbb{E}(\mathrm{e}^{-\lambda S(t)})=\mathrm{e}^{-t\Phi(\lambda)},\quad\forall\lambda\ge0,
\]
and $\Phi$ admits the representation 
\begin{equation}
\Phi(\lambda)=\int_{(0,\infty)}(1-\mathrm{e}^{-\lambda\tau})\,\mathrm{d}\sigma(\tau).\label{eq:Levy-Khintchine}
\end{equation}
The L{\'e}vy measure $\sigma$ is supported in $[0,\infty)$ and satisfies
\begin{equation*}
    \int_0^\infty(1\wedge\tau)\,\mathrm{d}\sigma(\tau)<\infty.
\end{equation*}
We define the kernel $k$ by 
\begin{equation}
k:(0,\infty)\longrightarrow(0,\infty),\;t\mapsto k(t):=\sigma\big((t,\infty)\big)\label{eq:k}
\end{equation}
and denote its Laplace transform by $\mathcal{K}$, that is, 
\begin{equation}
\mathcal{K}(\lambda):=\int_{0}^{\infty}\mathrm{e}^{-\lambda t}k(t)\,\mathrm{d}t,\quad\forall\lambda\ge0.\label{eq:LT-k}
\end{equation}
The following relation holds $\Phi(\lambda)=\lambda\mathcal{K}(\lambda)$,
$\forall\lambda\ge0.$  

In what follows we make the following assumption on the Laplace exponent
$\Phi$ and $\mathcal{K}$ associated to the subordinator $S$.
\begin{description}\label{condition-H}
\item [(H)] $\Phi$ is a complete Bernstein function, that is, $\sigma$
has a completely monotone density with respect to the Lebesgue measure, and the functions $\Phi$ and $\mathcal{K}$
satisfy 
\begin{equation}
\mathcal{K}(\lambda)\to\infty,\text{ as \ensuremath{\lambda\to0}};\quad\mathcal{K}(\lambda)\to0,\text{ as \ensuremath{\lambda\to\infty}};\label{eq:H1}
\end{equation}
\begin{equation}
\Phi(\lambda)\to0,\text{ as \ensuremath{\lambda\to0}};\quad\Phi(\lambda)\to\infty,\text{ as \ensuremath{\lambda\to\infty}}.\label{eq:H2}
\end{equation}
\end{description}
Classical examples of subordinators satisfying assumption (H) are the
$\alpha$-stable process and the Gamma process. The concrete expressions
of $k,\mathcal{K}$, and $\Phi$ for these subordinators are shown
in e.g., Example 2.1 in \cite{KdS20}. 

Denote by $D$ the inverse process of the subordinator $S$, that
is, 
\begin{equation}
D(t):=\inf\{s\ge0\mid S(s)\ge t\}\label{eq:inverse-sub},\quad t\ge0,
\end{equation}
and its marginal density by $\rho_{t}$, that is, for any $\tau\ge0$,
\[
\rho_{t}(\tau)\,\mathrm{d}\tau=\partial_{\tau}\mathbb{P}(D(t)\le\tau)=\partial_{\tau}\mathbb{P}(S(\tau)\ge t)=-\partial_{\tau}\mathbb{P}(S(\tau)<t).
\]

The marginal density $\rho_{t}$ plays an important role below. In
the following lemma we collect the most important properties of $\rho_{t}$.
For the proof, see \cite{Kochubei11} or Lemma~3.1 in \cite{Toaldo2015}. 
\begin{lem}
\label{lem:t-LT-G}The $t$-Laplace transform of the density $\rho_{t}(\tau)$
is given by 
\begin{equation}
\int_{0}^{\infty}\mathrm{e}^{-\lambda t}\rho_{t}(\tau)\,\mathrm{d}t=\mathcal{K}(\lambda)\mathrm{e}^{-\tau\lambda\mathcal{K}(\lambda)}.\label{eq:LT-G-t}
\end{equation}
The double ($\tau,t$)-Laplace transform of $\rho_{t}(\tau)$ is 
\begin{equation}
\int_{0}^{\infty}\int_{0}^{\infty}\mathrm{e}^{-p\tau}\mathrm{e}^{-\lambda t}\rho_{t}(\tau)\,\mathrm{d}t\,\mathrm{d}\tau=\frac{\mathcal{K}(\lambda)}{\lambda\mathcal{K}(\lambda)+p}.\label{eq:double-Laplace}
\end{equation}
\end{lem}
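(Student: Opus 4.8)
The plan is to establish the two Laplace transform identities in Lemma~\ref{lem:t-LT-G} by working directly from the definition of the marginal density $\rho_t$ and exploiting the relation $\Phi(\lambda)=\lambda\mathcal{K}(\lambda)$ together with the Laplace transform of the subordinator $S$. First I would recall that, by \eqref{eq:inverse-sub}, the event $\{D(t)\le\tau\}$ coincides with $\{S(\tau)\ge t\}$, so that the $t$-cumulative distribution function of $D(t)$ at $\tau$ is $\mathbb{P}(S(\tau)\ge t)$. Taking the $t$-Laplace transform of $\mathbb{P}(S(\tau)<t)=\mathbb{P}(t>S(\tau))$ and using Fubini, one gets $\int_0^\infty \mathrm{e}^{-\lambda t}\mathbb{P}(S(\tau)<t)\,\mathrm{d}t = \frac{1}{\lambda}\mathbb{E}(\mathrm{e}^{-\lambda S(\tau)}) = \frac{1}{\lambda}\mathrm{e}^{-\tau\Phi(\lambda)}$; differentiating in $\tau$ and negating (as in the displayed formula for $\rho_t$) produces $\int_0^\infty\mathrm{e}^{-\lambda t}\rho_t(\tau)\,\mathrm{d}t = \frac{\Phi(\lambda)}{\lambda}\mathrm{e}^{-\tau\Phi(\lambda)}$. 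Substituting $\Phi(\lambda)=\lambda\mathcal{K}(\lambda)$ gives exactly \eqref{eq:LT-G-t}.

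For the second identity \eqref{eq:double-Laplace}, I would simply take the $\tau$-Laplace transform of \eqref{eq:LT-G-t}: interchanging the order of integration (justified by positivity of $\rho_t$ and Tonelli),
\[
\int_0^\infty\int_0^\infty\mathrm{e}^{-p\tau}\mathrm{e}^{-\lambda t}\rho_t(\tau)\,\mathrm{d}t\,\mathrm{d}\tau = \int_0^\infty\mathrm{e}^{-p\tau}\,\mathcal{K}(\lambda)\mathrm{e}^{-\tau\lambda\mathcal{K}(\lambda)}\,\mathrm{d}\tau = \frac{\mathcal{K}(\lambda)}{p+\lambda\mathcal{K}(\lambda)},
\]
which is the claimed formula. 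This reduces the whole lemma to the single computation in the previous paragraph plus an elementary elementary-integral evaluation.

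The main technical obstacle, and the place where I would be most careful, is the justification of differentiating under the integral sign in $\tau$ and of the Fubini/Tonelli interchanges — in particular, confirming that $t\mapsto\mathbb{P}(S(\tau)<t)$ is absolutely continuous in $\tau$ so that the density $\rho_t(\tau)$ genuinely exists as written. Under assumption (H), $\Phi$ is a complete Bernstein function and $\sigma$ has a completely monotone density, which is precisely what guarantees the regularity of $\rho_t$ needed here; alternatively one can cite \cite{Kochubei11} or Lemma~3.1 in \cite{Toaldo2015} for this regularity and carry out only the Laplace-transform bookkeeping. Since the excerpt already attributes the proof to those references, I would present the argument above as the natural derivation and defer the delicate absolute-continuity point to the cited literature, noting that all interchanges are then legitimate because every integrand in sight is nonnegative.
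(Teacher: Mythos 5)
Your proposal is correct. Note, however, that the paper does not actually prove this lemma: its ``proof'' consists entirely of the citation ``see \cite{Kochubei11} or Lemma~3.1 in \cite{Toaldo2015}.'' What you have written is essentially the standard derivation that those references contain, so you have supplied a self-contained argument where the paper defers to the literature. Your key computation is right: from $\{D(t)\le\tau\}=\{S(\tau)\ge t\}$ and Fubini one gets
\[
\int_{0}^{\infty}\mathrm{e}^{-\lambda t}\,\mathbb{P}(S(\tau)<t)\,\mathrm{d}t=\mathbb{E}\!\left[\int_{S(\tau)}^{\infty}\mathrm{e}^{-\lambda t}\,\mathrm{d}t\right]=\frac{1}{\lambda}\mathrm{e}^{-\tau\Phi(\lambda)},
\]
and applying $-\partial_{\tau}$ together with $\Phi(\lambda)=\lambda\mathcal{K}(\lambda)$ yields \eqref{eq:LT-G-t}; the double transform \eqref{eq:double-Laplace} then follows from an elementary exponential integral. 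The one point you correctly flag as delicate --- the existence and regularity of the density $\rho_{t}(\tau)$ and the legitimacy of exchanging $\partial_{\tau}$ with the $t$-integral --- is exactly what the cited references (and assumption (H), via complete monotonicity) are needed for, so deferring that single point while carrying out the Laplace-transform bookkeeping explicitly is a reasonable and arguably more informative presentation than the paper's bare citation.
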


Given the subordinator $S$ and the kernel $k\in L_{\mathrm{loc}}^{1}(\mathbb{R}_{+})$
as defined in \eqref{eq:k}, we may introduce the differential-convolution operator of a continuous function $f$, such that $k*f$ is almost everywhere differentiable, by 
\begin{equation}
\big(\mathbb{D}_{t}^{(k)}f\big)(t)=\frac{\mathrm{d}}{\mathrm{d}t}\int_{0}^{t}k(t-\tau)f(\tau)\,\mathrm{d}\tau-k(t)f(0),\;t>0.\label{eq:general-derivative}
\end{equation}
The operator $\mathbb{D}_{t}^{(k)}$ is also known as generalized
fractional derivative, see \cite{Kochubei11} and references therein.

Now we define the class of admissible $k(t)$ and state the key result
(see Theorem~\ref{thm:main-result} below) to establish the existence
of the Green measures for the random time changes of the stochastic process $X$ 
defined in \eqref{eq:TC-MP}, see Theorem~\ref{thm:main-RTC-GM}.

In what follows, we use the notation $f\sim g$ as $t\to\infty$ and
say that $f$ and $g$ are asymptotically equivalent when $t\to\infty$
if $\lim_{t\to\infty}\frac{f(t)}{g(t)}=1$.

\begin{defn}[Admissible kernels - $\mathbb{K}(\mathbb{R}_{+})$]
The subset $\mathbb{K}(\mathbb{R}_{+})\subset L_{\mathrm{loc}}^{1}(\mathbb{R}_{+})$
of admissible kernels $k$ is defined by those elements in $L_{\mathrm{loc}}^{1}(\mathbb{R}_{+})$
satisfying (A) such that for certain $s_{0}>0$ with
\begin{equation}
\liminf_{\lambda\to0+}\frac{1}{\mathcal{K}(\lambda)}\int_{0}^{s_0/\lambda}k(t)\,dt>0\label{eq:A1}
\end{equation}
and 
\begin{equation}
\lim_{\genfrac{}{}{0pt}{2}{\frac{t}{r}\to1}{t,r\to\infty}}\left(\int_{0}^{t}k(s)\,ds\right)\left(\int_{0}^{r}k(s)\,ds\right)^{-1}=1.\label{eq:A2}
\end{equation}
\end{defn}

The asymptotic relation between the marginal density $\rho_{t}(\tau)$
of $D(t)$ and the kernel $k\in\mathbb{K}(\mathbb{R}_{+})$ is given
in the following theorem. For the proof, see Theorem~5.3 in \cite{KKdS19}. 

\begin{thm}
\label{thm:main-result}Let $\tau\in[0,\infty)$ be fixed and $k\in\mathbb{K}(\mathbb{R}_{+})$
a given admissible kernel. Define the map $\rho_{\cdot}(\tau):[0,\infty)\longrightarrow\mathbb{R}_{+}$,
$t\mapsto\rho_{t}(\tau)$ such that $\int_{0}^{\infty}\mathrm{e}^{-\lambda t}\rho_{t}(\tau)\,dt$
exists for all $\lambda>0$. Then 
\begin{equation}
\lim_{t\to\infty}\left(\int_{0}^{t}\rho_{s}(\tau)\,ds\right)\left(\int_{0}^{t}k(s)\,ds\right)^{-1}=1\label{eq:limit-G-k}
\end{equation}
or 
\[
M_{t}\big(\rho_{t}(\tau)\big):=\frac{1}{t}\int_{0}^{t}\rho_{s}(\tau)\,ds\sim\frac{1}{t}\int_{0}^{t}k(s)\,ds=:M_{t}\big(k(t)\big),\quad t\to\infty
\]
and $M_{t}\big(\rho_{t}(\tau)\big)$ is uniformly bounded in $\tau\in\mathbb{R}_{+}$. 
\end{thm}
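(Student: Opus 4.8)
The plan is to pass through Laplace transforms in $t$: first reduce \eqref{eq:limit-G-k} to an Abelian equivalence as $\lambda\to0^{+}$, and then invert it via a Karamata-type Tauberian theorem adapted to the class $\mathbb{K}(\mathbb{R}_{+})$. Put $U_{\tau}(t):=\int_{0}^{t}\rho_{s}(\tau)\,\mathrm{d}s$ and $V(t):=\int_{0}^{t}k(s)\,\mathrm{d}s$, both nondecreasing, continuous and vanishing at $t=0$. Using Fubini's theorem, the rule $\mathcal{L}[\int_{0}^{\cdot}g](\lambda)=\lambda^{-1}\mathcal{L}[g](\lambda)$, Lemma~\ref{lem:t-LT-G} (in the form \eqref{eq:LT-G-t}), the definition \eqref{eq:LT-k} of $\mathcal{K}$ and the identity $\Phi(\lambda)=\lambda\mathcal{K}(\lambda)$, one obtains
\[
\int_{0}^{\infty}\mathrm{e}^{-\lambda t}U_{\tau}(t)\,\mathrm{d}t=\frac{\mathcal{K}(\lambda)}{\lambda}\,\mathrm{e}^{-\tau\Phi(\lambda)},\qquad\int_{0}^{\infty}\mathrm{e}^{-\lambda t}V(t)\,\mathrm{d}t=\frac{\mathcal{K}(\lambda)}{\lambda},
\]
both finite for every $\lambda>0$. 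Since $\Phi$ is a complete Bernstein function with $\Phi(\lambda)\to0$ as $\lambda\to0^{+}$ (assumption (H), see \eqref{eq:H2}), for each fixed $\tau\ge0$ we have $0\le1-\mathrm{e}^{-\tau\Phi(\lambda)}\le\tau\Phi(\lambda)\to0$, hence
\[
\int_{0}^{\infty}\mathrm{e}^{-\lambda t}U_{\tau}(t)\,\mathrm{d}t\sim\int_{0}^{\infty}\mathrm{e}^{-\lambda t}V(t)\,\mathrm{d}t\qquad\text{as }\lambda\to0^{+}.
\]

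It remains to transport this from $\lambda\to0^{+}$ to $t\to\infty$, i.e.\ to show $U_{\tau}(t)\sim V(t)$. Here the admissibility of $k$ is used: $V$ is nondecreasing; by \eqref{eq:A1} together with $\mathcal{K}(\lambda)\to\infty$ as $\lambda\to0^{+}$ (see \eqref{eq:H1}) it tends to $+\infty$ and $\mathcal{K}(\lambda)$ stays of the same order as the truncated integral $\int_{0}^{s_{0}/\lambda}k$; and by \eqref{eq:A2} it has the slow-oscillation property $V(t)/V(r)\to1$ whenever $t,r\to\infty$ with $t/r\to1$. These are exactly the Tauberian side conditions under which the Karamata-type Tauberian theorem for monotone functions turns the Abelian equivalence of the previous step into $\lim_{t\to\infty}U_{\tau}(t)/V(t)=1$, which is \eqref{eq:limit-G-k}; equivalently $M_{t}(\rho_{t}(\tau))\sim M_{t}(k(t))$. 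If $\mathcal{K}$ were regularly varying at $0$ this would be immediate from the classical Karamata Tauberian theorem, and conditions \eqref{eq:A1}--\eqref{eq:A2} are designed to be the substitutes that make the argument go through without that hypothesis. I expect this Tauberian inversion to be the main obstacle: asymptotic equivalence of Laplace transforms at the origin does not in general imply asymptotic equivalence of the functions at infinity, so the crux is checking that the class $\mathbb{K}(\mathbb{R}_{+})$ carries just enough regularity.

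Finally, for the uniform bound in $\tau$: fix $t>0$, take $\lambda=1/t$, and note $\mathrm{e}^{s/t}\le\mathrm{e}$ for $s\in[0,t]$, so by \eqref{eq:LT-G-t}
\[
U_{\tau}(t)=\int_{0}^{t}\mathrm{e}^{s/t}\mathrm{e}^{-s/t}\rho_{s}(\tau)\,\mathrm{d}s\le\mathrm{e}\int_{0}^{\infty}\mathrm{e}^{-s/t}\rho_{s}(\tau)\,\mathrm{d}s=\mathrm{e}\,\mathcal{K}(1/t)\,\mathrm{e}^{-\tau\Phi(1/t)}\le\mathrm{e}\,\mathcal{K}(1/t).
\]
Dividing by $t$ and using $t^{-1}\mathcal{K}(1/t)=\Phi(1/t)$ gives $M_{t}(\rho_{t}(\tau))=t^{-1}U_{\tau}(t)\le\mathrm{e}\,\Phi(1/t)$, a bound that is independent of $\tau$, finite for every $t>0$, and (by \eqref{eq:H2}) vanishing as $t\to\infty$; in particular $M_{t}(\rho_{t}(\tau))$ is uniformly bounded in $\tau\in\mathbb{R}_{+}$, which completes the proof.
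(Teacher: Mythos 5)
The paper does not actually prove this theorem: its ``proof'' is the single line ``For the proof, see Theorem~5.3 in \cite{KKdS19}.'' Your proposal reconstructs the strategy of that reference, and everything you make explicit is correct: the Laplace-transform identities $\int_{0}^{\infty}\mathrm{e}^{-\lambda t}U_{\tau}(t)\,\mathrm{d}t=\lambda^{-1}\mathcal{K}(\lambda)\mathrm{e}^{-\tau\Phi(\lambda)}$ and $\int_{0}^{\infty}\mathrm{e}^{-\lambda t}V(t)\,\mathrm{d}t=\lambda^{-1}\mathcal{K}(\lambda)$ follow correctly from \eqref{eq:LT-G-t}, \eqref{eq:LT-k} and $\Phi(\lambda)=\lambda\mathcal{K}(\lambda)$; the Abelian equivalence as $\lambda\to0^{+}$ via $1-\mathrm{e}^{-\tau\Phi(\lambda)}\le\tau\Phi(\lambda)$ and \eqref{eq:H2} is right; and your uniform bound $M_{t}(\rho_{t}(\tau))\le\mathrm{e}\,\Phi(1/t)$, obtained by taking $\lambda=1/t$, is clean, correct, and independent of $\tau$ --- arguably more informative than the bare ``uniformly bounded'' claim in the statement.

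The one genuine soft spot is the Tauberian inversion, which you yourself flag as the crux. You dispose of it by appealing to ``the Karamata-type Tauberian theorem for monotone functions'' with side conditions \eqref{eq:A1}--\eqref{eq:A2}, but there is no off-the-shelf textbook theorem whose hypotheses are exactly these two conditions: the classical Karamata theorem needs regular variation of $\mathcal{K}$ at $0$, and the standard ratio Tauberian theorems need $O$-regular variation with controlled Matuszewska indices, neither of which is assumed here. Conditions \eqref{eq:A1}--\eqref{eq:A2} are bespoke hypotheses introduced in \cite{KKdS19} precisely so that a ratio Tauberian statement can be \emph{proved} for this class (roughly: the elementary bound $V(t)\le\mathrm{e}^{s_{0}}\mathcal{K}(s_{0}/t)$ combined with \eqref{eq:A1} pins $V(s_{0}/\lambda)\asymp\mathcal{K}(\lambda)$, and the slow-oscillation condition \eqref{eq:A2} lets one pass from the scale $s_{0}/\lambda$ back to $t$). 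That argument --- the actual content of Theorem~5.3 in \cite{KKdS19} --- is what is missing from your write-up. So your proof is the right proof in outline, and identical in route to the one the paper delegates to the literature, but as it stands the hardest step is asserted rather than established.
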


\subsection{Renormalized Green Measures}

We are now ready to study the Green measure of the
random time changes of the compound Poisson process $X$ by the inverse subordinator
$D(t)$. That is, the process $Z(t)=X(D(t))$, $t\ge0$, introduced
at the beginning of Section~\ref{sec:MP-RT}. Below we distinguish
between the Green measure of the Markov process $X$ from that of
$Z$. Thus, we use $\G^{X}(x,\mathrm{d}y)$ for the Green
measure of $X$ while $\G^{Z}(x,\mathrm{d}y)$ denotes the
Green measure associated to $Z$. 

Following the general approach from Section~\ref{sec:MP-GM}, we
would like to show that the following integral 
\[
\G^{Z}(x,\mathrm{d}y):=\int_{0}^{\infty}\nu_{t}^{x}(\mathrm{d}y)\,\mathrm{d}t=\int_{0}^{\infty}\int_{0}^{\infty}\mu_{\tau}^{x}(\mathrm{d}y)\rho_{t}(\tau)\,\mathrm{d}\tau\,\mathrm{d}t
\]
exists and obtain the representation 
\begin{equation}
\int_{0}^{\infty}v(t,x)\,\mathrm{d}t=\int_{0}^{\infty}\mathbb{E}^{x}[f(Z(t))]\,\mathrm{d}t=\int_{\X}f(y)\G^{Z}(x,\mathrm{d}y).\label{eq:representation-TC-MP}
\end{equation}
Formulated in this way, the Green measure of $Z$ does not exists
for the class of Markov processes $X$ (in any dimension $d$) and
subordinators introduced above, see Lemma 5 in \cite{KdS20}. Thus,
the representation \eqref{eq:representation-TC-MP} will not be possible.
To overcome this problem, we shall consider, instead, the renormalized
Green measure. More precisely, we would like to find the following
limit 
\begin{equation}
\G_{r}^{Z}(x,\mathrm{d}y):=\lim_{T\to\infty}\frac{1}{N(T)}\int_{0}^{T}\nu_{t}^{x}(\mathrm{d}y)\,\mathrm{d}t.\label{eq:Normalized-GM}
\end{equation}
The normalization $N(T)$ in \eqref{eq:Normalized-GM} is chosen in
a suitable way so that the renormalized Green measure $\G_{r}^{Z}(x,\mathrm{d}y)$
coincides with the Green measure $\G^{X}(x,\mathrm{d}y)$
of the initial Markov process $X$. More precisely, we have the following
theorem borrowed from \cite{KdS20} and included here for completeness.
\begin{thm}
\label{thm:main-RTC-GM}Assume that the Markov process $X$ in
$\X$, $d\geq\alpha$, has a Green measure $\G^{X}(x,\mathrm{d}y)$
and define 
\begin{equation}
N(T):=\int_{0}^{T}k(s)\,\mathrm{d}s,\quad T\ge0.\label{eq:normalization}
\end{equation}
Then the renormalized Green measure for $Z$ exists and we have 
\[
\G_{r}^{Z}(x,\mathrm{d}y)=\G^{X}(x,\mathrm{d}y).
\]
\end{thm}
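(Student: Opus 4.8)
The plan is to compute the normalized time-integral of the marginal distributions $\nu_t^x$ of $Z$ by interchanging the order of integration, using the subordination formula \eqref{SUBM}, and then to apply the asymptotic relation between $\rho_t(\tau)$ and the kernel $k$ from Theorem~\ref{thm:main-result}. First I would fix $f\in CL(\X)$ and write
\[
\frac{1}{N(T)}\int_0^T \mathbb{E}^x[f(Z(t))]\,\mathrm{d}t
=\frac{1}{N(T)}\int_0^T\!\!\int_0^\infty u(\tau,x)\rho_t(\tau)\,\mathrm{d}\tau\,\mathrm{d}t
=\int_0^\infty u(\tau,x)\left(\frac{1}{N(T)}\int_0^T\rho_t(\tau)\,\mathrm{d}t\right)\mathrm{d}\tau,
\]
where $u(\tau,x)=\mathbb{E}^x[f(X(\tau))]$ and the interchange is justified by nonnegativity (for $f\ge0$; the general case follows by splitting into positive and negative parts since $f\in L^1$). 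By Theorem~\ref{thm:main-result}, the inner bracket converges to $1$ as $T\to\infty$ for each fixed $\tau$, and is uniformly bounded in $\tau$. Hence, if I can pass the limit inside the $\tau$-integral, I obtain
\[
\lim_{T\to\infty}\frac{1}{N(T)}\int_0^T \mathbb{E}^x[f(Z(t))]\,\mathrm{d}t
=\int_0^\infty u(\tau,x)\,\mathrm{d}\tau
=\int_0^\infty\mathbb{E}^x[f(X(\tau))]\,\mathrm{d}\tau
=\int_\X f(y)\,\G^X(x,\mathrm{d}y),
\]
the last equality being exactly the corollary to Theorem~\ref{thm:average} (valid since $d\ge\alpha$ guarantees $\G^X$ exists). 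Testing against $f=\mathbbm{1}_B$ for $B\in\B_b(\X)$ then gives the stated identity of measures $\G_r^Z(x,\mathrm{d}y)=\G^X(x,\mathrm{d}y)$.

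The main obstacle is the dominated-convergence step: I need an integrable (in $\tau$) majorant for $\tau\mapsto u(\tau,x)\cdot\frac{1}{N(T)}\int_0^T\rho_t(\tau)\,\mathrm{d}t$ uniformly in $T$. The uniform boundedness of $M_t(\rho_t(\tau))$ in $\tau$ from Theorem~\ref{thm:main-result} handles the bracket, so it reduces to showing $\tau\mapsto u(\tau,x)=\int_\X f(y)\,\mu_\tau^x(\mathrm{d}y)$ is integrable over $(0,\infty)$ — but this is precisely the content of the existence of $\G^X$: $\int_0^\infty|u(\tau,x)|\,\mathrm{d}\tau\le\int_0^\infty\int_\X|f(y)|\,\mu_\tau^x(\mathrm{d}y)\,\mathrm{d}\tau=\int_\X|f(y)|\,\G^X(x,\mathrm{d}y)<\infty$ using the explicit bound $\G^X(x,\mathrm{d}y)=\delta_x(\mathrm{d}y)+G_0(x-y)\,\mathrm{d}y$ with $G_0$ uniformly bounded. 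Combining this with the uniform bound on the bracket yields a dominating function of the form $C\,|u(\cdot,x)|\in L^1(0,\infty)$, and dominated convergence applies.

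A secondary point worth a line in the write-up is the justification of interchanging $\int_0^T$ and $\int_0^\infty$ in the first display; for $f\ge0$ this is Tonelli, and for general $f\in CL(\X)$ one decomposes $f=f_+-f_-$ with $f_\pm\in CL(\X)$, $f_\pm\ge0$, and applies the result to each piece. Since the limit is linear in $f$ and the target $\int_\X f\,\mathrm{d}\G^X$ is as well, the general statement follows. Finally, to conclude equality of the measures rather than merely equality of the integrals against $CL(\X)$, I note that $CL(\X)$ (in particular $C_0(\X)$, or indicators of bounded Borel sets via monotone-class arguments) is a determining class for Radon measures on $\X$, so $\G_r^Z(x,\cdot)=\G^X(x,\cdot)$ as Radon measures. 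Everything else is bookkeeping once Theorem~\ref{thm:main-result} is invoked; no new estimates beyond those already established are needed.
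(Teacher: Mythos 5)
Your proposal is correct and follows essentially the same route as the paper's own proof: apply the subordination relation \eqref{SUBM}, interchange the integrals by Fubini--Tonelli, and invoke Theorem~\ref{thm:main-result} to pass to the limit. The only difference is that you spell out the dominated-convergence majorant (uniform boundedness of $M_t(\rho_t(\tau))$ in $\tau$ together with $\int_0^\infty|u(\tau,x)|\,\mathrm{d}\tau<\infty$ from the existence of $\G^X$) and the determining-class step, both of which the paper leaves implicit.
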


\begin{proof}
Using the subordination relation \eqref{SUBM} the renormalized Green
measure $\G_{r}^Z(x,\mathrm{d}y)$ may be written as 
\[
\G_{r}^{Z}(x,\mathrm{d}y)=\lim_{T\to\infty}\frac{1}{N(T)}\int_{0}^{T}\int_{0}^{\infty}\mu_{\tau}^{x}(\mathrm{d}y)\rho_{t}(\tau)\,\mathrm{d}\tau\,\mathrm{d}t.
\]
Now using Fubini's theorem and Theorem \ref{thm:main-result} it follows
that 
\[
\G_{r}^{Z}(x,\mathrm{d}y):=\lim_{T\to\infty}\frac{1}{N(T)}\int_{0}^{T}\nu_{t}^{x}(\mathrm{d}y)\,\mathrm{d}t=\int_{0}^{\infty}\mu_{t}^{x}(\mathrm{d}y)\,\mathrm{d}t=\G^{X}(x,\mathrm{d}y).
\]
This shows the claim and completes the proof. 
\end{proof}
We are now ready to state the main result of this section.
\begin{thm}
\label{thm:asymptotic-FKE}Under the assumptions of Theorem \ref{thm:average}
holds
\[
\frac{1}{N(t)}\int_{0}^{t}v(s,x)\,\mathrm{d}s\sim\int_{\X}f(y)\G^X(x,\mathrm{d}y),\quad t\to\infty.
\]
\end{thm}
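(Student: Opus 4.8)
The plan is to combine the renormalized Green measure result of Theorem~\ref{thm:main-RTC-GM} with the potential representation from the Corollary in Section~\ref{sec:Jump-Generators}. First I would recall that $v(s,x) = \E^x[f(Z(s))] = \int_\X f(y)\,\nu_s^x(\mathrm{d}y)$, so that
\[
\frac{1}{N(t)}\int_0^t v(s,x)\,\mathrm{d}s = \int_\X f(y)\left(\frac{1}{N(t)}\int_0^t \nu_s^x(\mathrm{d}y)\,\mathrm{d}s\right).
\]
The parenthesized object converges, as $t\to\infty$, to $\G^X(x,\mathrm{d}y)$ by Theorem~\ref{thm:main-RTC-GM} (with $N(T)=\int_0^T k(s)\,\mathrm{d}s$). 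Thus the heart of the argument is to justify passing the limit inside the integral against $f\in CL(\X)$, i.e.\ to upgrade the vague/setwise convergence of the normalized measures to convergence of the integrals $\int_\X f\,\mathrm{d}(\cdot)$.

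Second, I would make this passage rigorous via the subordination formula \eqref{SUB}. Writing $u(\tau,x)=\E^x[f(X(\tau))]$, we have $v(s,x)=\int_0^\infty u(\tau,x)\rho_s(\tau)\,\mathrm{d}\tau$, hence by Fubini
\[
\frac{1}{N(t)}\int_0^t v(s,x)\,\mathrm{d}s = \int_0^\infty u(\tau,x)\left(\frac{1}{N(t)}\int_0^t \rho_s(\tau)\,\mathrm{d}s\right)\mathrm{d}\tau.
\]
By Theorem~\ref{thm:main-result}, the inner factor $\frac{1}{N(t)}\int_0^t\rho_s(\tau)\,\mathrm{d}s = \frac{1}{N(t)}\int_0^t\rho_s(\tau)\,\mathrm{d}s$ tends to $1$ as $t\to\infty$ for each fixed $\tau$, and is uniformly bounded in $\tau$. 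Since $f\in CL(\X)$ gives $|u(\tau,x)|\le \|f\|_\infty$ for all $\tau$, and moreover $\int_0^\infty |u(\tau,x)|\,\mathrm{d}\tau = V(x,|f|)<\infty$ by the Corollary (the potential of $|f|\in CL(\X)$ exists), the integrand is dominated by an integrable function of $\tau$. Dominated convergence then yields
\[
\frac{1}{N(t)}\int_0^t v(s,x)\,\mathrm{d}s \;\longrightarrow\; \int_0^\infty u(\tau,x)\,\mathrm{d}\tau = V(x,f) = \int_\X f(y)\,\G^X(x,\mathrm{d}y),
\]
where the last equality is exactly the Corollary in Section~\ref{sec:Jump-Generators}, valid because $d\ge\alpha$ (Theorem~\ref{thm:average}). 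Dividing shows the ratio to the right-hand side tends to $1$, which is the claimed asymptotic equivalence.

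The main obstacle is the uniform-in-$\tau$ domination needed for the dominated convergence step: one must verify both that $M_t(\rho_t(\tau))/M_t(k(t))$ (equivalently $\frac{1}{N(t)}\int_0^t\rho_s(\tau)\,\mathrm{d}s$) is bounded uniformly in $\tau\ge0$ and in $t$ large, and that $\tau\mapsto u(\tau,x)$ is genuinely integrable on $(0,\infty)$ — the latter is where assumption (A) and the condition $d\ge\alpha$ enter, through the finiteness of $G_0$ in \eqref{eq:Fourier-Green-kernel}. Both ingredients are supplied by the results already in the excerpt (Theorem~\ref{thm:main-result} for the uniform bound, Theorem~\ref{thm:average} and its Corollary for integrability of $u(\cdot,x)$), so the proof is essentially an assembly of these pieces together with Fubini and dominated convergence; no new estimates are required.
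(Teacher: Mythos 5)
Your proposal is correct and follows essentially the same route as the paper's proof: apply the subordination formula \eqref{SUB}, interchange integrals by Fubini, use Theorem~\ref{thm:main-result} (pointwise convergence of $\frac{1}{N(t)}\int_{0}^{t}\rho_{s}(\tau)\,\mathrm{d}s$ to $1$ together with the uniform bound in $\tau$) to pass to the limit, and identify $\int_{0}^{\infty}u(\tau,x)\,\mathrm{d}\tau$ with $\int_{\X}f(y)\,\G^{X}(x,\mathrm{d}y)$ via Theorem~\ref{thm:average}. You merely spell out the dominated-convergence justification that the paper leaves implicit.
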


\begin{proof}
Using the subordination formula (\ref{SUB}) we obtain 
\[
\frac{1}{N(t)}\int_{0}^{t}v(s,x)\,\mathrm{d}s=\frac{1}{N(t)}\int_{0}^{t}\int_{0}^{\infty}u(\tau,x)\rho_{s}(\tau)\,\mathrm{d}\tau\,\mathrm{d}s.
\]
Again it follows from Fubini theorem, Theorem \ref{thm:main-result}
and the definition of $N(t)$ that 
\[
\frac{1}{N(t)}\int_{0}^{t}v(s,x)\,\mathrm{d}s=\int_{0}^{\infty}u(\tau,x)\left(\frac{1}{N(t)}\int_{0}^{t}\rho_{s}(\tau)\,\mathrm{d}s\right)\,\mathrm{d}\tau\underset{t\to\infty}{\longrightarrow}\int_{0}^{\infty}u(\tau,x)\,\mathrm{d}\tau.
\]
Then the result of the theorem follows from Theorem \ref{thm:average}. 
\end{proof}

\subsubsection*{Acknowledgments}

This work has been partially supported by Center for Research in Mathematics
and Applications (CIMA) related with the Statistics, Stochastic Processes
and Applications (SSPA) group, through the grant UIDB/MAT/04674/2020
of FCT-Funda{\c c\~a}o para a Ci{\^e}ncia e a Tecnologia, Portugal.

The financial support from the Ministry for Science and Education of
Ukraine through Project 0119U002583 is gratefully acknowledged.

\end{document}